\newtheorem{theorem}{Theorem}[section]
\newtheorem{lemma}[theorem]{Lemma}
\newtheorem{proposition}[theorem]{Proposition}
\newtheorem{corollary}[theorem]{Corollary}
\theoremstyle{definition}
\newtheorem{remark}[theorem]{Remark}
\begin{document}

\title[The fibre of the degree $3$ Map, Anick spaces and the double suspension]{The fibre of the degree $3$ Map, Anick spaces and the\\ double suspension} 
\author{Steven Amelotte}
\email{steven.amelotte@rochester.edu} 

\subjclass[2010]{55P35, 55P10}
\keywords{loop space decomposition, double suspension, Anick space, Kervaire invariant} 

\begin{abstract}
Let $S^{2n+1}\{p\}$ denote the homotopy fibre of the degree $p$ self map of $S^{2n+1}$. For primes $p \ge 5$, work of Selick shows that $S^{2n+1}\{p\}$ admits a nontrivial loop space decomposition if and only if $n=1$ or $p$. Indecomposability in all but these dimensions was obtained by showing that a nontrivial decomposition of $\Omega S^{2n+1}\{p\}$ implies the existence of a $p$-primary Kervaire invariant one element of order $p$ in~$\pi_{2n(p-1)-2}^S$. We prove the converse of this last implication and observe that the homotopy decomposition problem for $\Omega S^{2n+1}\{p\}$ is equivalent to the strong $p$-primary Kervaire invariant problem for all odd primes. For $p=3$, we use the $3$-primary Kervaire invariant element $\theta_3$ to give a new decomposition of $\Omega S^{55}\{3\}$ analogous to Selick's decomposition of $\Omega S^{2p+1}\{p\}$ and as an application prove two new cases of a long-standing conjecture stating that the fibre of the double suspension $S^{2n-1} \longrightarrow \Omega^2S^{2n+1}$ is homotopy equivalent to the double loop space of Anick's space.
\end{abstract}

\maketitle

\section{Introduction}

Localize all spaces and maps at an odd prime $p$. Let $S^{2n+1}\{p\}$ denote the homotopy fibre of the degree $p$ map on $S^{2n+1}$ and let $W_n$ denote the homotopy fibre of the double suspension $E^2 \colon S^{2n-1} \longrightarrow \Omega^2S^{2n+1}$. In \cite{S Odd} and \cite{S Decomposition}, Selick showed that there is a homotopy decomposition
\begin{equation} \label{Selick's decomposition}
\Omega^2 S^{2p+1}\{p\} \simeq \Omega^2S^3\langle 3 \rangle \times W_p, 
\end{equation}
where $S^3\langle 3 \rangle$ is the $3$-connected cover of $S^3$, and obtained as an immediate corollary that $p$ annihilates all $p$-torsion in $\pi_\ast(S^3)$. This exponent result is generalized by the exponent theorem of Cohen, Moore and Neisendorfer \cite{CMN1, CMN2, N 3-primary}, who used different loop space decompositions to construct a map $\varphi \colon \Omega^2S^{2n+1} \longrightarrow S^{2n-1}$ with the property that the composite \[ \Omega^2S^{2n+1} \stackrel{\varphi}{\longrightarrow} S^{2n-1} \stackrel{E^2}{\longrightarrow} \Omega^2S^{2n+1} \] is homotopic to the $p^\text{th}$ power map on $\Omega^2S^{2n+1}$ and proved by induction on $n$ that $p^n$ annihilates the $p$-torsion in $\pi_\ast(S^{2n+1})$. By a result of Gray \cite{G2}, if $p$ is an odd prime, then $\pi_\ast(S^{2n+1})$ contains infinitely many elements of order $p^n$, so this is the best possible odd primary homotopy exponent for spheres. The work of Cohen, Moore and Neisendorfer suggested that there should exist a space $T^{2n+1}(p)$ fitting in a fibration sequence
\[ \Omega^2S^{2n+1} \stackrel{\varphi}{\longrightarrow} S^{2n-1} \longrightarrow T^{2n+1}(p) \longrightarrow \Omega S^{2n+1} \]
in which their map $\varphi$ occurs as the connecting map. The existence of such a fibration was first proved by Anick for $p \ge 5$ in \cite{An}. A much simpler construction, valid for all odd primes, was later given by Gray and Theriault in \cite{GT2}, in which they also show that Anick's space $T^{2n+1}(p)$ has the structure of an $H$-space and that all maps in the fibration above can be chosen to be $H$-maps. 

A well-known conjecture in unstable homotopy theory states that the fibre $W_n$ of the double suspension $E^2 \colon S^{2n-1} \longrightarrow \Omega^2S^{2n+1}$ is a double loop space. Anick's space represents a potential candidate for a double classifying space of $W_n$, and one of Cohen, Moore and Neisendorfer's remaining open conjectures in \cite{CMN3} states that there should be a $p$-local homotopy equivalence $W_n \simeq \Omega^2T^{2np+1}(p)$. A stronger form of the conjecture (see e.g. \cite{AG}, \cite{G EHP}, \cite{T A case}) states that
\[ BW_n \simeq \Omega T^{2np+1}(p) \]
where $BW_n$ is the classifying space of $W_n$ first constructed by Gray \cite{G}. 
Such equivalences have only been shown to exist for $n=1$ and $n=p$. In the former case, both $BW_1$ and $\Omega T^{2p+1}(p)$ are known to be homotopy equivalent to $\Omega^2S^3\langle 3 \rangle$. Using Anick's fibration, Selick showed in \cite{S Space} that $T^{2p+1}(p) \simeq \Omega S^3\langle 3 \rangle$ and that the decomposition \eqref{Selick's decomposition} can be delooped to a homotopy equivalence
\[ \Omega S^{2p+1}\{p\} \simeq \Omega S^3\langle 3 \rangle \times BW_p. \] 
The $n=p$ case was proved in the strong form $BW_p \simeq \Omega T^{2p^2+1}(p)$ by Theriault \cite{T A case} using the above decomposition in an essential way. Under these identifications, he further showed that $\Omega S^{2p+1}\{p\}$ and $T^{2p+1}(p) \times \Omega T^{2p^2+1}(p)$ are equivalent as $H$-spaces. 

For primes $p \ge 5$, similar decompositions of $\Omega S^{2n+1}\{p\}$ are not possible if $n \neq 1$ or $p$. This result was obtained in \cite{S Reformulation} by first showing that for $n>1$ the existence of a certain spherical homology class imposed by a nontrivial homotopy decomposition of $\Omega S^{2n+1}\{p\}$ implies the existence of an element of $p$-primary Kervaire invariant one in $\pi_{2n(p-1)-2}^S$, and then appealing to Ravenel's \cite{Ra} result on the nonexistence of such elements when $p \ge 5$ and $n\neq p$. For $p=3$, the question of whether $\Omega S^{2n+1}\{3\}$ admits a nontrivial decomposition for $n=3^j$ with $j>1$ was left open. In this short note, we prove that the strong odd primary Kervaire invariant problem is in fact equivalent to the problem of decomposing the loop space $\Omega S^{2n+1}\{p\}$. When $p=3$, this equivalence can be used to import results from stable homotopy theory to obtain new results concerning the unstable homotopy type of $\Omega S^{2n+1}\{3\}$ as well as some cases of the conjecture that $W_n$ is a double loop space.


\begin{theorem} \label{main thm}
Let $p$ be an odd prime. Then the following are equivalent:
\begin{enumerate}
\item There exists a $p$-primary Kervaire invariant one element $\theta_j \in \pi_{2p^j(p-1)-2}^S$ of order $p$;
\item There is a homotopy decomposition of $H$-spaces \[ \Omega S^{2p^j+1}\{p\} \simeq T^{2p^j+1}(p) \times \Omega T^{2p^{j+1}+1}(p). \]
\end{enumerate}
Furthermore, if the above conditions hold, then there are homotopy equivalences of $H$-spaces
\[ BW_{p^{j-1}} \simeq \Omega T^{2p^j+1}(p) \ \textrm{ and } \ BW_{p^j} \simeq \Omega T^{2p^{j+1}+1}(p). \]
\end{theorem}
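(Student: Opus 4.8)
The plan is to realise the decomposition in (2) as the splitting of a fibration, reduce its existence to a single spherical homology class, and then identify that class with the Kervaire invariant element; the two $BW$-equivalences will then be read off by comparison with a second, double-suspension-type splitting. Throughout I set $m=p^j$, so that $\theta_j \in \pi^S_{2m(p-1)-2}$ and the bottom cell of $\Omega T^{2mp+1}(p)$ lies in degree $2mp-2=2m(p-1)-2+2m$. Since Anick's fibration and the $H$-space structure on $T^{2m+1}(p)$ are available for every odd prime by Gray and Theriault \cite{GT2}, I would first establish, building on Anick's fibration, an $H$-fibration
\[ T^{2m+1}(p) \longrightarrow \Omega S^{2m+1}\{p\} \stackrel{q}{\longrightarrow} \Omega T^{2mp+1}(p), \]
whose fibre carries the low-dimensional homology of $\Omega S^{2m+1}\{p\}$ and whose base carries the $v_1$-periodic part. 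Statement (2) is then precisely the assertion that $q$ admits an $H$-map section, equivalently that the bottom class of the base in degree $2mp-2$ lifts to a spherical class in $H_{2mp-2}(\Omega S^{2m+1}\{p\};\mathbb{Z}/p)$.

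For the implication (2) $\Rightarrow$ (1), which is essentially Selick's, a section of $q$ exhibits this class as spherical, and the analysis of the attaching maps of $\Omega S^{2m+1}\{p\}$ through the EHP sequence in \cite{S Reformulation} converts a spherical class in this degree into a $p$-primary Kervaire invariant one element in $\pi^S_{2m(p-1)-2}$. The only extra point is the order: the relevant map is built from a mod $p$ Moore space, so the resulting element has order exactly $p$, which is the strong form. With $m=p^j$ this is the element $\theta_j$.

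The new content is the converse (1) $\Rightarrow$ (2), and I expect its construction to be the main obstacle. Starting from $\theta_j$ of order $p$, the order-$p$ hypothesis lets one factor $\theta_j$ through a mod $p$ Moore space and thereby realise the bottom class of $\Omega T^{2mp+1}(p)$ by an actual map $S^{2mp-2}\to\Omega S^{2m+1}\{p\}$ covering the inclusion of that bottom cell; the vanishing of the secondary attaching map that obstructs this lift is exactly what the order-$p$ condition guarantees. Because $\Omega S^{2m+1}\{p\}$ is a loop space and $\Omega T^{2mp+1}(p)$ is generated as an $H$-space from its bottom cell, I would then extend this map multiplicatively to an $H$-map $\Omega T^{2mp+1}(p)\to\Omega S^{2m+1}\{p\}$ splitting $q$. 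The delicate part is precisely this extension: one must pass from the stable homological datum $\theta_j$ to an honest unstable $H$-map and check that the obstructions to extending over the higher cells vanish, and this is where the Gray--Theriault $H$-space structure and the order-$p$ hypothesis are used together.

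Finally, granting (2), I would deduce the two $BW$-equivalences by the comparison method of Theriault \cite{T A case}. Over $\Omega S^{2m+1}\{p\}$ there is also a double-suspension $H$-fibration with fibre $T^{2m+1}(p)$ and base $BW_m$; the section produced above splits it as well, giving $\Omega S^{2m+1}\{p\}\simeq T^{2m+1}(p)\times BW_m$. Comparing this with (2) and cancelling the common factor $T^{2m+1}(p)$ --- legitimate because the two splittings induce the same map in homology and are compatible through $q$ --- yields $BW_m\simeq \Omega T^{2mp+1}(p)$, i.e. $BW_{p^j}\simeq \Omega T^{2p^{j+1}+1}(p)$. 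For the remaining equivalence I would loop the fibration once and identify the looped first factor $\Omega T^{2p^j+1}(p)$ with $BW_{p^{j-1}}$; writing $T^{2p^j+1}(p)=T^{2(p^{j-1})p+1}(p)$ and running the same identification one stage earlier, anchored at the known base case $BW_1\simeq\Omega T^{2p+1}(p)\simeq\Omega^2 S^3\langle 3\rangle$, gives $BW_{p^{j-1}}\simeq\Omega T^{2p^j+1}(p)$. Here the subtle points are justifying the cancellation of $H$-space factors and delooping the resulting equivalence of double loop spaces to one of classifying spaces, both of which follow Theriault's template.
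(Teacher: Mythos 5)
Your setup is circular at the outset and omits the key technical tool. You begin by positing an $H$-fibration $T^{2m+1}(p) \longrightarrow \Omega S^{2m+1}\{p\} \stackrel{q}{\longrightarrow} \Omega T^{2mp+1}(p)$, but no such fibration is available in advance: the fibration that exists for all odd primes is the Gray--Theriault secondary EHP fibration $T^{2m+1}(p) \to \Omega S^{2m+1}\{p\} \stackrel{H}{\to} BW_m$, and replacing the base $BW_m$ by $\Omega T^{2mp+1}(p)$ presupposes the conjectural equivalence $BW_m \simeq \Omega T^{2mp+1}(p)$, which is one of the conclusions you are trying to prove. Relatedly, the step ``extend the bottom-cell map multiplicatively to an $H$-map $\Omega T^{2mp+1}(p) \to \Omega S^{2m+1}\{p\}$'' is not a construction: $\Omega T^{2mp+1}(p)$ is not freely generated by its bottom cell, and there is no general procedure for propagating a map of the bottom cell over the whole space. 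The mechanism that actually works is the Anick--Gray extension lemma (Lemma~\ref{extension lemma}): one extends the spherical class $S^{2mp-2} \to \Omega S^{2m+1}\{p\}$ (supplied by Selick's reformulation applied to $\theta_j$) over $P^{2mp-1}(p)$ using the $H$-space exponent $p$ of $\Omega S^{2m+1}\{p\}$, passes to the adjoint $P^{2mp}(p) \to S^{2m+1}\{p\}$, applies the lemma to obtain $s \colon T^{2mp+1}(p) \to S^{2m+1}\{p\}$, and then checks that $H \circ \Omega s$ is degree one on the bottom cell and hence an equivalence by the Gray--Theriault atomicity result. Without this lemma the implication (a)$\Rightarrow$(b) does not go through.

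The derivation of $BW_{p^{j-1}} \simeq \Omega T^{2p^j+1}(p)$ also fails as proposed. Running ``the same identification one stage earlier'' would require a Kervaire invariant element $\theta_{j-1}$ of order $p$, which is not part of the hypothesis and can genuinely be absent when $\theta_j$ exists: at $p=3$ the element $\theta_3$ exists while $\theta_2$ does not, and this is exactly the case the theorem is designed to exploit. The paper instead extracts both $BW$-equivalences from the single splitting at stage $j$: it loops the decomposition and compares it with Gray's fibration $BW_n \to \Omega^2 S^{2np+1} \stackrel{\phi}{\to} S^{2np-1}$ for $n=p^{j-1}$, whose fibre inclusion lifts to an $H$-map $j' \colon BW_{p^{j-1}} \to \Omega^2 S^{2p^j+1}\{p\}$; projecting onto the factor $\Omega T^{2p^j+1}(p)$ is degree one on the bottom cell and hence an equivalence, again by atomicity. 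Finally, ``cancelling the common factor $T^{2m+1}(p)$'' from two product decompositions is not a legitimate operation in homotopy theory; the paper never performs such a cancellation, relying instead on the atomicity of maps $\Omega T^{2np+1}(p) \to BW_n$ that are degree one on the bottom cell.
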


From this point of view, Selick's decomposition of $\Omega S^{2p+1}\{p\}$ and the previously known equivalences $BW_1 \simeq \Omega T^{2p+1}(p)$ and $BW_p \simeq \Omega T^{2p^2+1}(p)$ correspond to the existence (at all odd primes) of the Kervaire invariant class $\theta_1=\beta_1 \in \pi_{2p^2-2p-2}^S$ given by the first element of the periodic beta family in the stable homotopy groups of spheres. By Ravenel's negative solution to the Kervaire invariant problem for primes $p \ge 5$, Theorem \ref{main thm} has new content only at the prime $p=3$. For example, in addition to the $3$-primary Kervaire invariant element $\theta_1 \in \pi_{10}^S$ for $p=3$ and $j=1$ corresponding to the decomposition of $\Omega S^7\{3\}$, it is known that there exists a $3$-primary Kervaire invariant element $\theta_3 \in \pi_{106}^S$ which we use to obtain the following decomposition of $\Omega S^{55}\{3\}$ and prove the $n=p^2$ and $n=p^3$ cases of the $BW_n \simeq \Omega T^{2np+1}(p)$ conjecture at $p=3$.

\begin{corollary} \label{main cor}
There are $3$-local homotopy equivalences of $H$-spaces
\begin{enumerate}
\item $\Omega S^{55}\{3\} \simeq T^{55}(3) \times \Omega T^{163}(3)$
\item $BW_9 \simeq \Omega T^{55}(3)$
\item $BW_{27} \simeq \Omega T^{163}(3)$.
\end{enumerate}
\end{corollary}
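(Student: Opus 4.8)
The plan is to obtain all three equivalences as direct specializations of Theorem~\ref{main thm} at $p=3$ and $j=3$, the only remaining ingredient being the relevant stable-homotopy input. First I would record the numerology for $p=3$, $j=3$: one computes $2p^{j}+1 = 55$, $2p^{j+1}+1 = 163$, $2p^{j}(p-1)-2 = 106$, $p^{j-1} = 9$, and $p^{j} = 27$. With these values, the assertions (a), (b), (c) of the corollary are exactly the conclusions of Theorem~\ref{main thm}, so it suffices to verify that hypothesis (1) of that theorem holds in this case, namely that there is a $3$-primary Kervaire invariant one element $\theta_3 \in \pi_{106}^S$ of order $3$.

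The essential step is therefore to import this single fact from stable homotopy theory. Here I would cite the known computations of the $3$-primary stable stems: the class $\theta_3$ is detected in the Adams--Novikov spectral sequence by the appropriate permanent cycle (the element $b_2$, in the convention where $\theta_j$ corresponds to $b_{j-1}$), and is known to exist and to have order exactly $3$. I would emphasize the order, since hypothesis (1) encodes the \emph{strong} Kervaire invariant problem: it is not enough that a Kervaire class be present in $\pi_{106}^S$, one needs a representative of order $p=3$. This is the only place where stable information enters, and it is available at $p=3$ precisely because Ravenel's nonexistence theorem applies only for $p \ge 5$, leaving the $3$-primary Kervaire classes, including $\theta_3$, to be established by direct computation.

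Granting the existence of $\theta_3$ of order $3$, the implication $(1)\Rightarrow(2)$ of Theorem~\ref{main thm} yields the $H$-space decomposition $\Omega S^{55}\{3\} \simeq T^{55}(3) \times \Omega T^{163}(3)$, which is part~(a); and since both equivalent conditions now hold, the concluding clause of the theorem delivers $BW_{9} \simeq \Omega T^{55}(3)$ and $BW_{27} \simeq \Omega T^{163}(3)$, which are parts~(b) and~(c). No further unstable homotopy theory is required. The main obstacle is thus not in the present argument at all, all of the homotopy-theoretic content being subsumed by Theorem~\ref{main thm}; rather it lies entirely in the stable input. The whole corollary rests on pinning down from the literature both the existence of a Kervaire class in $\pi_{106}^S$ and the sharper fact that it is realized by an element of order exactly $3$.
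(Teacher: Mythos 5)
Your proposal is correct and follows essentially the same route as the paper: verify the numerology for $p=3$, $j=3$, import the existence of $\theta_3\in\pi_{106}^S$ detected by the permanent cycle $b_2$, note that it has order $3$ (the paper pins this down via Ravenel's computation that $\pi_{106}^S\cong\mathbb{Z}/3\mathbb{Z}$ at the prime $3$), and apply Theorem~\ref{main thm}. No discrepancies worth noting.
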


\begin{remark}
The equivalence of conditions (a) and (b) in Theorem 1.1 does not hold for $p=2$. In \cite{CCPS}, Campbell, Cohen, Peterson and Selick showed that for $n>1$ a nontrivial decomposition of the fibre $\Omega^2S^{2n+1}\{2\}$ of the squaring map implies the existence of an element $\theta \in \pi_{2n-2}^S$ of Kervaire invariant one such that $\theta\eta$ is divisible by $2$. Since such elements are well known to exist only for $n=2, 4$ or $8$, these are the only dimensions for which $\Omega^2S^{2n+1}\{2\}$ can decompose nontrivially. Explicit decompositions of $\Omega^2S^5\{2\}$, $\Omega^2S^9\{2\}$ and $\Omega^3S^{17}\{2\}$ corresponding to the first three $2$-primary Kervaire invariant classes $\theta_1=\eta^2$, $\theta_2=\nu^2$ and $\theta_3=\sigma^2$ are given in \cite{C 2-primary}, \cite{CS} and \cite{Am}.
\end{remark}

A further consequence of Theorem \ref{main thm} unique to the $p=3$ case concerns the associativity of Anick spaces. Unlike when $p \ge 5$, in which case $T^{2n+1}(p)$ is a homotopy commutative and homotopy associative $H$-space for all $n\ge 1$, counterexamples to the homotopy associativity of $T^{2n+1}(3)$ have been observed in \cite{G Abelian} and \cite{T Properties}. In particular, in \cite{G Abelian}, it was shown that if $T^{2n+1}(3)$ is homotopy associative, then $n=3^j$ for some $j\ge 0$. The proof given there also shows that a homotopy associative $H$-space structure on $T^{2n+1}(3)$ implies the existence of a three-cell complex 
\[ S^{2n+1} \cup_3 e^{2n+2} \cup e^{6n+1} \] 
with nontrivial mod $3$ Steenrod operation $\mathcal{P}^n$, which in turn implies (by Spanier--Whitehead duality and the Liulevicius--Shimada--Yamanoshita factorization of $\mathcal{P}^{p^j}$ by secondary cohomology operations) the existence of an element of strong Kervaire invariant one. Using Theorem \ref{main thm}, we observe that the converse is also true to obtain the following.

\begin{theorem} \label{second thm}
Let $n > 1$. Then the mod $3$ Anick space $T^{2n+1}(3)$ is homotopy associative if and only if there exists a $3$-primary Kervaire invariant one element of order $3$ in $\pi_{4n-2}^S$.
\end{theorem}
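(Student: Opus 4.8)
The plan is to prove Theorem \ref{second thm} by combining the already-cited result from \cite{G Abelian} (the forward direction) with a new converse argument powered by Theorem \ref{main thm}. The forward implication is given: if $T^{2n+1}(3)$ is homotopy associative, then $n = 3^j$ for some $j \ge 0$, and moreover one obtains a three-cell complex $S^{2n+1} \cup_3 e^{2n+2} \cup e^{6n+1}$ carrying a nontrivial $\mathcal{P}^n$, which via Spanier--Whitehead duality and the Liulevicius--Shimada--Yamanoshita factorization yields a strong Kervaire invariant one element. Since $n > 1$ forces $j \ge 1$, and $4n - 2 = 4 \cdot 3^j - 2 = 2 \cdot 3^j(3-1) - 2 = 2p^j(p-1) - 2$ at $p = 3$, this element lives in exactly the degree $\pi_{4n-2}^S = \pi_{2p^j(p-1)-2}^S$ named in condition (a) of Theorem \ref{main thm}. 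So the forward direction is essentially bookkeeping once I confirm the degree match and that "strong Kervaire invariant one" is precisely the order-$p$ condition on $\theta_j$.

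For the converse, I would argue as follows. Suppose there exists a $3$-primary Kervaire invariant one element of order $3$ in $\pi_{4n-2}^S = \pi_{2 \cdot 3^j(3-1)-2}^S$; since $n > 1$ this is a genuine Kervaire class $\theta_j$ with $j \ge 1$. By the implication (a) $\Rightarrow$ (b) of Theorem \ref{main thm} (specialized to $p = 3$), I obtain the $H$-space decomposition $\Omega S^{2 \cdot 3^j + 1}\{3\} \simeq T^{2 \cdot 3^j+1}(3) \times \Omega T^{2 \cdot 3^{j+1}+1}(3)$ together with the equivalence $BW_{3^j} \simeq \Omega T^{2 \cdot 3^{j+1}+1}(3)$. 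The key observation is that the relevant Anick space whose associativity is in question is $T^{2n+1}(3) = T^{2 \cdot 3^j + 1}(3)$, and I want to extract homotopy associativity of this factor from the decomposition. The natural route is to use the established equivalence $BW_{3^{j-1}} \simeq \Omega T^{2 \cdot 3^j+1}(3)$ from the "Furthermore" clause of Theorem \ref{main thm}: since any loop space $\Omega T^{2 \cdot 3^j + 1}(3)$ is automatically homotopy associative, and since (by the Gray--Theriault construction recalled in the introduction) $T^{2 \cdot 3^j + 1}(3)$ sits in Anick's fibration as an $H$-space delooping data, I would show that the $H$-structure on $T^{2 \cdot 3^j+1}(3)$ inherited from this setup is homotopy associative precisely because it is realized as a retract or factor carrying a compatible multiplication.

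\textbf{The main obstacle} I anticipate is making the last step rigorous: establishing that homotopy associativity of $T^{2n+1}(3)$ follows from the decomposition, rather than merely that $T^{2n+1}(3)$ admits \emph{some} associative-looking structure. The cleanest fix is to use that the decomposition of Theorem \ref{main thm}(b) is an equivalence of $H$-spaces and that the projection onto the $T^{2n+1}(3)$ factor, combined with the $H$-map structure of all maps in Anick's fibration (guaranteed by Gray--Theriault), forces the multiplication on $T^{2n+1}(3)$ to agree up to homotopy with one that is manifestly associative --- for instance by exhibiting $T^{2n+1}(3)$ as compatible with the loop multiplication on a nearby loop space. Concretely, I expect to invoke Theriault's identification (recalled in the introduction for the $n=p$ case) that under these equivalences $\Omega S^{2n+1}\{3\}$ and $T^{2n+1}(3) \times \Omega T^{2np+1}(3)$ are equivalent \emph{as $H$-spaces}, which pins down the $H$-structure on the $T^{2n+1}(3)$ factor and lets associativity be read off.

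\textbf{A subtle point} worth flagging is uniqueness of the $H$-structure: a priori $T^{2n+1}(3)$ could carry several inequivalent $H$-structures, and homotopy associativity is a property of a given one. I would address this by appealing to the known rigidity of the $H$-structure on Anick spaces (the $H$-space structure constructed by Gray--Theriault is essentially unique), so that "the" multiplication whose associativity we test is the same one produced by the decomposition. Once this identification is in place, the converse follows immediately, and together with the cited forward direction the equivalence in Theorem \ref{second thm} is complete.
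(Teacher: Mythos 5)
Your proposal follows essentially the same route as the paper: the forward direction is the cited Gray argument producing the three-cell complex with nontrivial $\mathcal{P}^n$ and dualizing, and the converse applies Theorem \ref{main thm} to exhibit $T^{2n+1}(3)$ as an $H$-space retract of the loop space $\Omega S^{2n+1}\{3\}$, which immediately forces homotopy associativity. The "subtle point" you flag about uniqueness of the $H$-structure is not really an issue, since the equivalence in Theorem \ref{main thm}(b) is built from the $H$-map $E$ of the Gray--Theriault fibration, so the retraction is automatically an $H$-map for the standard multiplication on $T^{2n+1}(3)$.
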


\section{The Proof of Theorem \ref{main thm}}

The bulk of the proof of Theorem \ref{main thm} will consist of a slight generalization of the argument given in \cite[Theorem 1.2]{T A case}, which we briefly describe below. As in \cite{T A case}, the following extension lemma, originally proved in \cite{AG} for $p \ge 5$ and later extended to include the $p=3$ case in \cite{GT2}, will be crucial. We write $P^n(p^r)$ for the mod $p^r$ Moore space $S^{n-1} \cup_{p^r} e^n$ and for a space $X$ define homotopy groups with $\mathbb{Z}/p^r\mathbb{Z}$ coefficients by $\pi_n(X; \mathbb{Z}/p^r\mathbb{Z}) = [P^n(p^r), X]$.

\begin{lemma} \label{extension lemma}  
Let $p$ be an odd prime. Let $X$ be an $H$-space such that $p^k \cdot \pi_{2np^k-1}(X; \mathbb{Z}/p^{k+1}\mathbb{Z})=0$ for $k \ge 1$. Then any map $P^{2n}(p) \to X$ extends to a map $T^{2n+1}(p) \to X$. 
\end{lemma}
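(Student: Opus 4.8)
The plan is to build the extension by induction up a filtration of $T^{2n+1}(p)$ adapted to its $p$-th power structure, using the $H$-multiplication on $X$ to absorb all multiplicatively decomposable cells and isolating the genuine obstructions at the ``$p^k$-th power'' cells in dimensions $2np^k$. I would first record the mod $p$ homology
\[ H_*(T^{2n+1}(p); \mathbb{Z}/p) \cong \Lambda(u_{2n-1}) \otimes \mathbb{Z}/p[v_{2n}], \]
where $u$ and $v$ are the classes of the bottom Moore space $P^{2n}(p)$ (so that $\beta_1 u = v$), and where the higher Bockstein $\beta_{k+1}$ detects the pair $uv^{p^k-1}, v^{p^k}$ in dimensions $2np^k-1$ and $2np^k$. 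Since the given map $f\colon P^{2n}(p)\to X$ is already defined on the classes $u$ and $v$, and the multiplication $\mu\colon X\times X\to X$ (with a fixed choice of iterated products) lets me map any decomposable cell, the only cells presenting a genuine extension problem are the pure $p$-th powers $v^{p^k}$ together with their Bockstein partners, $k\ge 1$.

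Accordingly I would filter $T^{2n+1}(p)$ by subcomplexes $Y_0\subset Y_1\subset Y_2\subset\cdots$ with colimit $T^{2n+1}(p)$, where $Y_{k-1}$ carries the monomials $u^\epsilon v^j$ with $j<p^k$ and $Y_k$ is obtained from $Y_{k-1}$ by adjoining the stage-$k$ Moore space $P^{2np^k}(p^{k+1})$ spanned by $uv^{p^k-1}$ and $v^{p^k}$. Extending $f$ over $Y_0$ follows from $\mu$ as in the James construction. At stage $k$, given an extension $g_{k-1}\colon Y_{k-1}\to X$, the obstruction to extending across the stage-$k$ Moore space is a single coefficient homotopy class
\[ \mathcal{O}_k = g_{k-1}\circ(\text{attaching map}) \in \pi_{2np^k-1}(X;\mathbb{Z}/p^{k+1}). \]

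The crux is to show that $\mathcal{O}_k$ lies in the image of multiplication by $p^k$. Because the stage-$k$ cells are joined by a Bockstein of order exactly $k+1$, they survive the lower Bocksteins $\beta_1,\dots,\beta_k$, and the standard relationship between higher Bocksteins and divisibility forces the attaching data, hence $\mathcal{O}_k$, to be $p^k$-divisible. The hypothesis $p^k\cdot\pi_{2np^k-1}(X;\mathbb{Z}/p^{k+1})=0$ then annihilates $\mathcal{O}_k$, so $g_{k-1}$ extends to $g_k\colon Y_k\to X$; passing to the colimit produces the desired map $T^{2n+1}(p)\to X$.

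The \emph{main obstacle} is twofold. First, making the divisibility claim precise requires a careful comparison between the genuine $p^k$-th power attaching map in $T^{2n+1}(p)$ and the $p^k$-fold $\mu$-power, so as to confirm that, after the decomposable cells are absorbed by $\mu$, the residual stage-$k$ obstruction is concentrated in a single group and is truly $p^k$-divisible; this analysis of the higher Bockstein attaching maps is the technical heart of the argument. Second, since $X$ is assumed only to be an $H$-space, the whole induction must be organized to use only the unital multiplication, with no appeal to homotopy associativity or commutativity --- exactly the rigidity that fails for Anick spaces at $p=3$. This is the point at which the original $p\ge 5$ argument of Anick--Gray needs the refinement of Gray--Theriault to include $p=3$, and I expect the most delicate part to be verifying that the stagewise extensions assemble coherently in the absence of associativity.
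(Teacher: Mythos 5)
First, a point of comparison: the paper does not prove this lemma at all --- it is imported verbatim from Anick--Gray \cite{AG} (for $p\ge 5$) and Gray--Theriault \cite{GT2} (for $p=3$) --- so there is no internal proof to measure you against. Judged on its own terms, your proposal reproduces the correct architecture of the Anick--Gray argument: filter $T^{2n+1}(p)$ so that the interesting successive cofibres are the Moore spaces $P^{2np^k}(p^{k+1})$ spanned by $uv^{p^k-1}$ and $v^{p^k}$, identify the obstruction to extending over stage $k$ as a class $\mathcal{O}_k\in\pi_{2np^k-1}(X;\mathbb{Z}/p^{k+1})$, and kill it with the hypothesis once it is known to be divisible by $p^k$. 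But the two steps you defer to the ``main obstacle'' are not technical footnotes; they are the entire content of the lemma, and the justifications you sketch for them do not work.

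The divisibility of $\mathcal{O}_k$ by $p^k$ cannot be deduced from the fact that $uv^{p^k-1}$ and $v^{p^k}$ are linked by a $(k{+}1)$-st order Bockstein. That Bockstein datum is exactly equivalent to the statement that the cofibre $Y_k/Y_{k-1}$ is $P^{2np^k}(p^{k+1})$, i.e.\ it identifies the coefficient group in which the obstruction lives; it says nothing about the divisibility of the attaching map $P^{2np^k-1}(p^{k+1})\to Y_{k-1}$ as a homotopy class, let alone of its composite into $X$. For comparison, the corresponding cells of $\Omega S^{2n+1}\{p\}$ carry the same higher Bockstein pattern, yet the analogous splitting question there is governed by the Kervaire invariant, as the rest of this paper shows --- so ``higher Bockstein forces a divisible attaching map'' is false as a general principle. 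In \cite{AG} and \cite{GT2} this divisibility is extracted from the explicit construction of $T^{2n+1}(p)$ (control of Whitehead products and of the maps in the secondary EHP fibration), and it is precisely the point where the $p=3$ case required a genuinely new construction. Likewise, ``absorbing the decomposable cells with $\mu$'' is not automatic for a bare H-space: extending over a cell whose homology class is decomposable still requires its attaching map to become null in $X$, and the coherence of the stagewise choices without associativity is a real issue that you flag but do not resolve. As written, the proposal is a correct plan with the decisive steps missing.
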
 


In \cite{GT2}, Anick's space is constructed as the homotopy fibre in a secondary $EHP$ fibration
\begin{equation} \label{fib}
T^{2n+1}(p) \stackrel{E}{\longrightarrow} \Omega S^{2n+1}\{p\} \stackrel{H}{\longrightarrow} BW_n 
\end{equation}
where $E$ is an $H$-map which induces in mod $p$ homology the inclusion of 
\[ H_\ast(T^{2n+1}(p)) \cong \Lambda(a_{2n-1}) \otimes \mathbb{Z}/p\mathbb{Z}[c_{2n}] \]
into
\[ H_\ast(\Omega S^{2n+1}\{p\}) \cong  \left( \bigotimes_{i=0}^\infty \Lambda(a_{2np^i-1}) \right) \otimes \left( \bigotimes_{i=1}^\infty \mathbb{Z}/p\mathbb{Z}[b_{2np^i-2}] \right) \otimes \mathbb{Z}/p\mathbb{Z}[c_{2n}], \]
and $H$ induces the projection onto
\[ H_\ast(BW_n) \cong \left( \bigotimes_{i=1}^\infty \Lambda(a_{2np^i-1}) \right) \otimes \left( \bigotimes_{i=1}^\infty \mathbb{Z}/p\mathbb{Z}[b_{2np^i-2}] \right). \]
When $n=p$, it follows from Selick's decomposition of $\Omega S^{2p+1}\{p\}$ that $H$ admits a right homotopy inverse $s \colon BW_p \to \Omega S^{2p+1}\{p\}$ splitting the homotopy fibration \eqref{fib} in this case. Restricting to the bottom cell of $BW_p$, Theriault \cite{T A case} extended the composite
\[ S^{2p^2-2} \lhook\joinrel\longrightarrow BW_p \stackrel{s}{\longrightarrow} \Omega S^{2p+1}\{p\} \]
to a map $P^{2p^2-1}(p) \to \Omega S^{2p+1}\{p\}$ and then applied Lemma \ref{extension lemma} to the adjoint map $P^{2p^2}(p) \to S^{2p+1}\{p\}$ to obtain an extension $T^{2p^2+1}(p) \to S^{2p+1}\{p\}$. Finally, looping this last map, he showed that the composite
\[ \Omega T^{2p^2+1}(p) \longrightarrow \Omega S^{2p+1}\{p\} \stackrel{H}{\longrightarrow} BW_p \]
is a homotopy equivalence, thus proving the $n=p$ case of the conjecture that $BW_n \simeq \Omega T^{2np+1}(p)$.

In our case, we will use Lemma \ref{extension lemma} to first construct a right homotopy inverse of $H \colon \Omega S^{2n+1}\{p\} \to BW_n$ in dimensions $n=p^j$ for which there exists an element $\theta_j \in \pi_{2p^j(p-1)-2}^S$ of strong Kervaire invariant one and then follow the same strategy as above to obtain both a homotopy decomposition of $\Omega S^{2p^j+1}\{p\}$ and a homotopy equivalence $BW_{p^j} \simeq \Omega T^{2p^{j+1}+1}(p)$. These equivalences can then be used to compare the loops on \eqref{fib} with the $n=p^{j-1}$ case of a homotopy fibration
\[ BW_n \longrightarrow \Omega^2S^{2np+1}\{p\} \longrightarrow W_{np} \]
to further obtain a homotopy equivalence of fibres $BW_{p^{j-1}} \simeq \Omega T^{2p^j+1}(p)$.

\begin{proof}[Proof of Theorem \ref{main thm}]
We first show that condition (b) implies condition (a). Given any homotopy equivalence 
\[ \psi \colon T^{2p^j+1}(p) \times \Omega T^{2p^{j+1}+1}(p) \stackrel{\sim\,}{\longrightarrow} \Omega S^{2p^j+1}\{p\}, \] set $n=p^j$ and let $f$ denote the composite
\[ f \colon S^{2np-2} \lhook\joinrel\longrightarrow \Omega T^{2np+1}(p) \stackrel{i_2}{\longrightarrow} T^{2n+1}(p) \times \Omega T^{2np+1}(p) \stackrel{\psi}{\longrightarrow} \Omega S^{2n+1}\{p\} \]
where the first map is the inclusion of the bottom cell of $\Omega T^{2np+1}(p)$ and the second map $i_2$ is the inclusion of the second factor. Then \[ f_\ast(\iota)=b_{2np-2} \in H_{2np-2}(\Omega S^{2n+1}\{p\}) \]
where $\iota$ is the generator of $H_{2np-2}(S^{2np-2})$. Since the homology class $b_{2np-2}$ is spherical if and only if there exists a stable map $g \colon P^{2n(p-1)-1}(p) \to S^0$ such that the Steenrod operation $\mathcal{P}^n$ acts nontrivially on $H^\ast(C_g)$ by \cite{S Reformulation}, it follows that $\pi_{2n(p-1)-2}^S$ contains an element of $p$-primary Kervaire invariant one and order $p$.

Conversely, suppose there exists a $p$-primary Kervaire invariant one element $\theta_j \in \pi_{2p^j(p-1)-2}^S$ of order $p$. Then by \cite{S Reformulation}, the homology class $b_{2p^{j+1}-2} \in H_{2p^{j+1}-2}(\Omega S^{2p^j+1}\{p\})$ is spherical, so there exists a map $f \colon S^{2p^{j+1}-2} \to \Omega S^{2p^j+1}\{p\}$ with Hurewicz image $b_{2p^{j+1}-2}$. Now following the proof of \cite[Theorem 1.2]{T A case}, since $\Omega S^{2p^j+1}\{p\}$ has $H$-space exponent $p$, it follows that $f$ has order $p$ and hence extends to a map 
\[ e \colon P^{2p^{j+1}-1}(p) \longrightarrow \Omega S^{2p^j+1}\{p\}. \] 
Let $\hat{e} \colon P^{2p^{j+1}}(p) \to S^{2p^j+1}\{p\}$ denote the adjoint of $e$. Again, because $\Omega S^{2p^j+1}\{p\}$ has $H$-space exponent $p$, we have that \[ p\cdot \pi_\ast(S^{2p^j+1}\{p\}; \mathbb{Z}/p^k\mathbb{Z})=0 \] for all $k \ge 1$, and since $S^{2p^j+1}\{p\}$ is an $H$-space \cite{N1}, the map $\hat{e}$ satisfies the hypotheses of Lemma \ref{extension lemma} and therefore admits an extension 
\[ s \colon T^{2p^{j+1}+1}(p) \longrightarrow S^{2p^j+1}\{p\}. \] 
Note that this factorization of $\hat{e}$ through $s$ implies that the adjoint map $e$ factors through $\Omega s$, so we have a commutative diagram
\[
\xymatrix{
S^{2p^{j+1}-2} \ar[r] \ar[ddrr]^f & P^{2p^{j+1}-1}(p) \ar[r] \ar[ddr]^e & \Omega T^{2p^{j+1}+1}(p) \ar[dd]^{\Omega s} \\
&&\\
& & \Omega S^{2p^j+1}\{p\}
}
\]
where the maps along the top row are skeletal inclusions, and hence $(\Omega s)_\ast$ is an isomorphism on $H_{2p^{j+1}-2}(\;)$ since $f_\ast$ is. Now since $H \colon \Omega S^{2p^j+1}\{p\} \to BW_{p^j}$ induces an epimorphism in homology, the composite
\[ \Omega T^{2p^{j+1}+1}(p) \stackrel{\Omega s\,}{\longrightarrow} \Omega S^{2p^j+1}\{p\} \stackrel{H}{\longrightarrow} BW_{p^j} \]
induces an isomorphism of the lowest nonvanishing reduced homology group 
\[ H_{2p^{j+1}-2}(\Omega T^{2p^{j+1}+1}(p)) \cong H_{2p^{j+1}-2}(BW_{p^j}) \cong \mathbb{Z}/p\mathbb{Z}. \]
By \cite{GT1}, any map $\Omega T^{2np+1}(p) \to BW_{n}$ which is degree one on the bottom cell must be a homotopy equivalence, and thus $H \circ \Omega s$ is a homotopy equivalence. Composing a homotopy inverse of $H \circ \Omega s$ with $\Omega s$, we obtain a right homotopy inverse of $H$, which shows that the homotopy fibration
\[ T^{2p^j+1}(p) \stackrel{E}{\longrightarrow} \Omega S^{2p^j+1}\{p\} \stackrel{H}{\longrightarrow} BW_{p^j} \]
splits. Moreover, letting $m$ denote the loop multiplication on $\Omega S^{2p^j+1}\{p\}$, the composite
\[ T^{2p^j+1}(p) \times \Omega T^{2p^{j+1}+1}(p) \xrightarrow{E \times \Omega s\,} \Omega S^{2p^j+1}\{p\} \times \Omega S^{2p^j+1}\{p\} \stackrel{m\,}{\longrightarrow} \Omega S^{2p^j+1}\{p\} \]
defines an equivalence of $H$-spaces since $E$ and $\Omega s$ are $H$-maps and $m$ is homotopic to the loops on the $H$-space multiplication on $S^{2p^j+1}\{p\}$.

The homotopy equivalence $H \circ \Omega s \colon \Omega T^{2p^{j+1}+1}(p) \to BW_{p^j}$ is not necessarily multiplicative, but the $H$-space decomposition of $\Omega S^{2p^j+1}\{p\}$ constructed above can now be used exactly as in the proof of \cite[Theorem 1.1]{T A case} to produce an $H$-map $BW_{p^j} \to \Omega T^{2p^{j+1}+1}(p)$ which is also a homotopy equivalence. 

It remains to show that there is an equivalence of $H$-spaces $BW_{p^{j-1}} \simeq \Omega T^{2p^j+1}(p)$. In his construction of a classifying space of $W_n$, Gray \cite{G} introduced a $p$-local homotopy fibration
\[ BW_n \stackrel{j}{\longrightarrow} \Omega^2S^{2np+1} \stackrel{\phi}{\longrightarrow} S^{2np-1} \]
where the map $j$ has order $p$ and hence lifts to a map $j' \colon BW_n \to \Omega^2S^{2np+1}\{p\}$. By \cite{T 3-primary}, $j'$ can be chosen to be an $H$-map when $p\ge 3$. Since $j_\ast$ is an isomorphism in degree $2np-1$, it follows by commutativity with the Bockstein that $j'_\ast$ is an isomorphism in degree $2np-2$. Let $\gamma$ denote the equivalence of $H$-spaces $T^{2p^j+1}(p) \times \Omega T^{2p^{j+1}+1}(p) \stackrel{\sim\,}{\longrightarrow} \Omega S^{2p^j+1}\{p\}$ constructed above. As $\Omega\gamma$ is also an equivalence of $H$-spaces, it has a homotopy inverse $(\Omega\gamma)^{-1}$ which is also an $H$-map. Consider the composite
\[ BW_{p^{j-1}} \stackrel{j'}{\longrightarrow} \Omega^2S^{2p^j+1}\{p\} \xrightarrow{(\Omega\gamma)^{-1}} \Omega T^{2p^j+1}(p) \times \Omega^2T^{2p^{j+1}+1}(p) \stackrel{\pi_1\,}{\longrightarrow} \Omega T^{2p^j+1}(p) \]
where $j'$ is the lift of $j$ with $n=p^{j-1}$ and $\pi_1$ is the projection onto the first factor. Since all three maps in this composition induce isomorphisms on $H_{2p^j-2}(\;)$, it again follows from the atomicity result in \cite{GT1} that the composite defines a homotopy equivalence $BW_{p^{j-1}} \simeq \Omega T^{2p^j+1}(p)$, which is an equivalence of $H$-spaces since each map above is an $H$-map.
\end{proof}

\section{Applications}

In this section we derive Corollary \ref{main cor} and Theorem \ref{second thm} from Theorem \ref{main thm} and discuss some other consequences in the $p=3$ case.

\subsection{The homotopy decomposition of $\Omega S^{55}\{3\}$}

Since, by \cite{S Reformulation}, $\Omega S^{2n+1}\{p\}$ is atomic for all $n$ such that $\pi_{2n(p-1)-2}^S$ contains no element of $p$-primary Kervaire invariant one, $\Omega S^{2n+1}\{p\}$ is indecomposable for $n \neq p^j$ and it follows from Theorem \ref{main thm} that the decomposition problem for $\Omega S^{2n+1}\{p\}$ is equivalent to the strong $p$-primary Kervaire invariant problem for odd primes $p$. The $3$-primary Kervaire invariant problem is open, but the elements $b_{j-1} \in \mathrm{Ext}_{\mathcal{A}_p}^{2,2p^j(p-1)}(\mathbb{F}_p, \mathbb{F}_p)$ in the $E_2$-term of the Adams spectral sequence which potentially detect elements of odd primary Kervaire invariant one are known to behave differently for $p=3$ than they do for primes $p \ge 5$. 

While $b_0$ is a permanent cycle representing $\theta_1 \in \pi_{2p(p-1)-2}^S$ at all odd primes, Ravenel showed in \cite{Ra} that for $j>1$ and $p \ge 5$ the elements $b_{j-1}$ support nontrivial differentials in the Adams spectral sequence and hence that none of the $\theta_j$ exist for $j>1$ and $p \ge 5$. For $p=3$, however, it is known (see \cite{Ra, Ra Green}) that, although $b_1$ supports a nontrivial differential, $b_2$ is a permanent cycle representing a $3$-primary Kervaire invariant class $\theta_3 \in \pi_{106}^S$. 


\begin{proof}[Proof of Corollary \ref{main cor}]
According to \cite{Ra Green}, $\pi_{106}^S \cong \mathbb{Z}/3\mathbb{Z}$ after localizing at $p=3$, so $\theta_3$ has order $3$ and the result follows from Theorem \ref{main thm}. 
\end{proof}

\begin{remark}
We note that the nonexistence of $\theta_2$ at $p=3$ implies that $\Omega S^{2p^2+1}\{p\} = \Omega S^{19}\{3\}$ is atomic and hence indecomposable by the result in \cite{S Reformulation} mentioned above.
\end{remark}

Observe that since the mod $p$ Moore space $P^2(p)$ is the homotopy cofibre of the degree $p$ self map $p \colon S^1 \to S^1$, by applying the functor $\mathrm{Map}_\ast(-, S^{2n+1})$ to the homotopy cofibration 
\[ S^1 \stackrel{p}{\longrightarrow} S^1 \longrightarrow P^2(p) \] 
we obtain a homotopy fibration
\[ \mathrm{Map}_\ast(P^2(p), S^{2n+1}) \longrightarrow \Omega S^{2n+1} \stackrel{p}{\longrightarrow} \Omega S^{2n+1} \]
which identifies the mapping space $\mathrm{Map}_\ast(P^2(p), S^{2n+1})$ with the homotopy fibre $\Omega S^{2n+1}\{p\}$ of the $p^\mathrm{th}$ power map on the loop space $\Omega S^{2n+1}$. The decomposition of $\Omega S^{55}\{3\}$ in Corollary \ref{main cor} therefore induces the following splitting of homotopy groups with $\mathbb{Z}/3\mathbb{Z}$ coefficients analogous to Selick's \cite{S Decomposition} splitting of $\pi_\ast(S^{2p+1}; \mathbb{Z}/p\mathbb{Z})$.

\begin{corollary}
For $k\ge 4$, there are isomorphisms
\begin{align*}
\pi_k(S^{55}; \mathbb{Z}/3\mathbb{Z}) &\cong \pi_{k-2}(T^{55}(3)) \oplus \pi_{k-1}(T^{163}(3)) \\
&\cong \pi_{k-4}(W_9) \oplus \pi_{k-3}(W_{27}).
\end{align*}
\end{corollary}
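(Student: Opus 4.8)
The plan is to reduce the statement to the $H$-space decomposition of Corollary \ref{main cor} together with the identification $\mathrm{Map}_\ast(P^2(3), S^{55}) \simeq \Omega S^{55}\{3\}$ recorded above, and then to keep careful track of the degree shifts.

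First I would reinterpret the homotopy groups with coefficients via the smash--mapping space adjunction. Since $P^k(3) = \Sigma^{k-2}P^2(3) = S^{k-2}\wedge P^2(3)$, for every $k \ge 4$ there are natural isomorphisms
\begin{align*}
\pi_k(S^{55};\mathbb{Z}/3\mathbb{Z}) = [P^k(3), S^{55}] &\cong [S^{k-2}\wedge P^2(3), S^{55}] \\
&\cong [S^{k-2}, \mathrm{Map}_\ast(P^2(3), S^{55})] \\
&= \pi_{k-2}(\mathrm{Map}_\ast(P^2(3), S^{55})).
\end{align*}
Combined with $\mathrm{Map}_\ast(P^2(3), S^{55}) \simeq \Omega S^{55}\{3\}$ this gives $\pi_k(S^{55};\mathbb{Z}/3\mathbb{Z}) \cong \pi_{k-2}(\Omega S^{55}\{3\})$.

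Next I would substitute the decomposition of Corollary \ref{main cor}(a). As homotopy groups carry products to direct sums and loops shift degree by one,
\[
\pi_{k-2}(\Omega S^{55}\{3\}) \cong \pi_{k-2}(T^{55}(3)) \oplus \pi_{k-2}(\Omega T^{163}(3)) \cong \pi_{k-2}(T^{55}(3)) \oplus \pi_{k-1}(T^{163}(3)),
\]
which is the first displayed isomorphism. For the second I would insert the equivalences $BW_9 \simeq \Omega T^{55}(3)$ and $BW_{27} \simeq \Omega T^{163}(3)$ from Corollary \ref{main cor}(b),(c) and use $W_n \simeq \Omega BW_n$. Shifting degrees through the loop functors yields
\begin{align*}
\pi_{k-2}(T^{55}(3)) &\cong \pi_{k-3}(\Omega T^{55}(3)) \cong \pi_{k-3}(BW_9) \cong \pi_{k-4}(W_9), \\
\pi_{k-1}(T^{163}(3)) &\cong \pi_{k-2}(\Omega T^{163}(3)) \cong \pi_{k-2}(BW_{27}) \cong \pi_{k-3}(W_{27}),
\end{align*}
which assembles into the second line.

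The argument is essentially formal once the decomposition of Corollary \ref{main cor} is in hand, so there is no serious obstacle; the only point requiring care is the bookkeeping of degree shifts and the role of the hypothesis $k \ge 4$. I expect this last point to be the main subtlety: the adjunction in the first step realizes $\pi_k(S^{55};\mathbb{Z}/3\mathbb{Z})$ as maps out of $P^k(3)$, which is a double suspension precisely when $k \ge 4$, and it is this that guarantees that every set appearing above is an abelian group and that the isomorphisms are isomorphisms of groups rather than merely bijections of pointed sets.
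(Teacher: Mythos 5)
Your argument is correct and is exactly the one the paper intends: it combines the identification $\mathrm{Map}_\ast(P^2(3),S^{55})\simeq \Omega S^{55}\{3\}$ recorded just before the corollary with the decomposition and the equivalences $BW_9\simeq\Omega T^{55}(3)$, $BW_{27}\simeq\Omega T^{163}(3)$ of Corollary \ref{main cor}, with the degree shifts handled correctly. The paper leaves this bookkeeping implicit, so your write-up (including the remark on why $k\ge 4$ ensures isomorphisms of abelian groups) is a faithful expansion of the paper's own reasoning.
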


\subsection{Homotopy associativity and exponents for mod $3$ Anick spaces}

The following two useful properties of $T^{2n+1}(p)$ were conjectured by Anick and Gray \cite{An, AG}:
\begin{enumerate}[label=(\roman*)]
\item $T^{2n+1}(p)$ is a homotopy commutative and homotopy associative $H$-space; \label{prop1}
\item $T^{2n+1}(p)$ has homotopy exponent $p$. \label{prop2}
\end{enumerate}
Both properties have been established for all $p \ge 5$ and $n \ge 1$, but only partial results have been obtained in the $p=3$ case. For example, it was found in \cite{T Properties} that $T^7(3)$ is both homotopy commutative and homotopy associative but that homotopy associativity fails for $T^{11}(3)$. More generally, Gray showed in \cite{G Abelian} that if $T^{2n+1}(3)$ is homotopy associative, then $n=3^j$ for some $j\ge 0$ and moreover that property (i) implies property (ii).

Concerning property (ii), in general $T^{2n+1}(3)$ is only known to have homotopy exponent bounded above by $9$. (This can be seen using fibration \eqref{fib} and the fact that $BW_n$ has $3$-primary exponent $3$, for example.) Since $T^{2n+1}(p)$ is an $H$-space for all $p\ge 3$, one could also ask for the stronger property that $T^{2n+1}(p)$ has $H$-space exponent $p$, i.e., that its $p^\mathrm{th}$ power map is null homotopic. We note that, when they occur, decompositions of $\Omega S^{2n+1}\{3\}$ give some evidence for (ii).
\pagebreak

\begin{corollary}
The following hold:
\begin{enumerate}
\item $T^7(3)$ and $T^{55}(3)$ are homotopy commutative and homotopy associative $H$-spaces;
\item $T^7(3)$, $T^{55}(3)$, $\Omega T^{19}(3)$ and $\Omega T^{163}(3)$ each have $H$-space exponent $3$.
\end{enumerate}
\end{corollary}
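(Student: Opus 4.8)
The plan is to read both parts off the $H$-space decompositions already in hand: the exponent statement (b) will follow formally from these decompositions together with the exponent of $\Omega S^{2\cdot 3^j+1}\{3\}$, while the associativity half of (a) will follow from Theorem \ref{second thm}. The one genuinely delicate point is the homotopy commutativity of $T^{55}(3)$, which is not visible from the loop space decomposition and will have to be imported from \cite{G Abelian}.

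For (b) I would use the two relevant cases of Theorem \ref{main thm} at $p=3$. The case $j=1$, available because $\theta_1\in\pi_{10}^S$ exists, gives an equivalence of $H$-spaces $\Omega S^7\{3\}\simeq T^7(3)\times\Omega T^{19}(3)$, and the case $j=3$ recorded in Corollary \ref{main cor} gives $\Omega S^{55}\{3\}\simeq T^{55}(3)\times\Omega T^{163}(3)$. As used in the proof of Theorem \ref{main thm}, each loop space $\Omega S^{2\cdot 3^j+1}\{3\}$ has $H$-space exponent $3$. Since each decomposition is an equivalence of $H$-spaces for the product multiplication on the left-hand side, the cube map of the product is homotopic to the product of the cube maps of the two factors; transporting the null homotopy of the cube map across the equivalence and then composing with a factor inclusion and the matching projection shows that the cube map of each individual factor is null homotopic. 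This establishes $H$-space exponent $3$ simultaneously for $T^7(3)$, $\Omega T^{19}(3)$, $T^{55}(3)$ and $\Omega T^{163}(3)$.

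For (a), the statement about $T^7(3)$ is exactly the result of \cite{T Properties}, so only $T^{55}(3)$ requires argument. Its homotopy associativity is immediate from Theorem \ref{second thm}: with $2n+1=55$ we have $n=27$ and $4n-2=106$, and the $3$-primary Kervaire invariant element $\theta_3\in\pi_{106}^S$ has order $3$, so $T^{55}(3)$ is homotopy associative. One can see this structurally as well: the decomposition of Corollary \ref{main cor} exhibits $T^{55}(3)$ as an $H$-space retract of $\Omega S^{55}\{3\}$ via the $H$-map $E$ of \eqref{fib}, and an $H$-retract of a homotopy associative $H$-space---in particular of any loop space---is homotopy associative.

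The main obstacle is the homotopy commutativity of $T^{55}(3)$. The retract argument does not supply it, since $\Omega S^{55}\{3\}$ is only a single loop space and need not be homotopy commutative, and in any case commutativity of a direct factor is not forced by commutativity of the total space. Here I would appeal instead to Gray's analysis of abelian Anick spaces in \cite{G Abelian}, where the obstructions to homotopy commutativity and to homotopy associativity of $T^{2n+1}(3)$ are shown to coincide, so that a homotopy associative mod $3$ Anick space is automatically homotopy commutative. Applying this to the associative $H$-space $T^{55}(3)$ finishes the proof of (a). Locating and citing the precise form of this implication in \cite{G Abelian} is the part of the argument that warrants the most care.
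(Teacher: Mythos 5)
Your part (b) and the associativity half of part (a) are correct and essentially the paper's own argument. The gap is in how you handle the homotopy commutativity of $T^{55}(3)$. You assert that ``$\Omega S^{55}\{3\}$ is only a single loop space and need not be homotopy commutative'' and that commutativity of a factor is not forced by commutativity of the total space; both claims are wrong in this setting, and they push you onto an implication (``homotopy associative implies homotopy commutative'' for mod $3$ Anick spaces) that you admit you cannot precisely locate in \cite{G Abelian} and which the paper never needs.

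The observation you are missing is that $S^{2n+1}\{p\}$ is itself an $H$-space \cite{N1}, so $\Omega S^{55}\{3\}$ is the loop space of an $H$-space; its loop multiplication is homotopic to the loops on the $H$-space multiplication of $S^{55}\{3\}$, and by the Eckmann--Hilton interchange argument it is therefore homotopy commutative as well as homotopy associative. Since the decomposition $\Omega S^{55}\{3\}\simeq T^{55}(3)\times\Omega T^{163}(3)$ from Corollary \ref{main cor} is an equivalence of $H$-spaces with the product multiplication on the right-hand side, $T^{55}(3)$ is an $H$-space retract of $\Omega S^{55}\{3\}$ (the inclusion and projection of a factor are $H$-maps), and an $H$-retract of a homotopy commutative (resp.\ homotopy associative) $H$-space inherits that property: writing $\mu_T\simeq r\circ\mu\circ(i\times i)$, one checks directly that $\mu_T\circ\tau\simeq\mu_T$ and that the two associations of $\mu_T$ agree up to homotopy. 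The same argument handles $T^7(3)$ via $\Omega S^7\{3\}\simeq T^7(3)\times\Omega T^{19}(3)$, so even the citation of \cite{T Properties} is not needed. This two-line retract argument is the whole of the paper's proof of (a), and it renders the appeal to \cite{G Abelian} unnecessary.
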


\begin{proof}
Since the homotopy equivalences $\Omega S^7\{3\} \simeq T^7(3) \times \Omega T^{19}(3)$ and $\Omega S^{55}\{3\} \simeq T^{55}(3) \times \Omega T^{163}(3)$ which follow from Theorem \ref{main thm} are equivalences of $H$-spaces, part (b) follows immediately from the fact that $\Omega S^{2n+1}\{3\}$ has $H$-space exponent $3$ \cite{N1}, and part (a) follows from the fact that $\Omega S^{2n+1}\{3\}$ is homotopy associative and homotopy commutative as it is the loop space of an $H$-space.
\end{proof}

\begin{proof}[Proof of Theorem \ref{second thm}]
Let $n>1$ and suppose $T^{2n+1}(3)$ is homotopy associative. Then the proof of \cite[Theorem A.2]{G Abelian} shows that there exists a three-cell complex 
\[ X = S^{2n+1} \cup_3 e^{2n+2} \cup e^{6n+1} \] 
with nontrivial mod $3$ Steenrod operation $\mathcal{P}^n \colon H^{2n+1}(X) \to H^{6n+1}(X)$. The attaching map of the middle cell of a Spanier--Whitehead dual of $X$ then defines an element of Kervaire invariant one in $\pi_{4n-2}^S$ which has order $3$ since it extends over a mod $3$ Moore space. Alternatively, by \cite[Proposition 7.1]{T Properties}, the homotopy associativity of $T^{2n+1}(3)$ implies that a certain composite
\[ S^{6n-3} \xrightarrow{[\iota,[\iota,\iota]]\,} \Omega S^{2n} \stackrel{r}{\longrightarrow} S^{2n-1} \]
is divisible by $3$, where $[\iota,[\iota,\iota]]$ denotes the triple Samelson product of the generator of $\pi_{2n-1}(\Omega S^{2n})$ and $r$ is a left homotopy inverse of the suspension $E \colon S^{2n-1} \to \Omega S^{2n}$. It is easy to check that the composite above coincides with the image of the generator of the lowest nonvanishing $3$-local homotopy group $\pi_{6n-3}(W_n) \cong \mathbb{Z}/3\mathbb{Z}$ under the homotopy fibre map $W_n \to S^{2n-1}$, and the divisibility of this element is a well-known equivalent formulation of the strong Kervaire invariant problem.

Conversely, if there exists a $3$-primary Kervaire invariant element of order $3$ in $\pi_{4n-2}^S$, then $n=3^j$ for some $j\ge 1$ and it follows from Theorem \ref{main thm} that $T^{2n+1}(3)$ is a homotopy associative $H$-space as it is an $H$-space retract of a loop space.
\end{proof}


\section{A stable splitting of $\Omega S^{2n+1}\{p\}$}

It is well known that $S^{2n+1}\{p\}$ splits as a wedge of mod $p$ Moore spaces after suspending once. In this section we determine the stable homotopy type of the loop space $\Omega S^{2n+1}\{p\}$ by observing that although the homotopy fibration
\[ T^{2n+1}(p) \stackrel{E}{\longrightarrow} \Omega S^{2n+1}\{p\} \stackrel{H}{\longrightarrow} BW_n \]
only splits in Kervaire invariant one dimensions, it splits for all $n$ after suspending twice. As in the previous sections, $p$ denotes an odd prime and all spaces and maps are assumed to be localized at $p$.

\begin{proposition} \label{stable splitting}
For all $n\ge 1$, there is a homotopy equivalence
\[ \Sigma^2\Omega S^{2n+1}\{p\} \simeq \Sigma^2(T^{2n+1}(p) \times BW_n). \]
\end{proposition}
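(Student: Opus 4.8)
The plan is to upgrade the collapse of the mod $p$ Serre spectral sequence of \eqref{fib} to a geometric splitting after two suspensions. Writing $G=\Omega S^{2n+1}\{p\}$ and $T=T^{2n+1}(p)$, the homology computation recorded after \eqref{fib} shows that $E$ and $H$ realize an isomorphism $H_\ast(G;\mathbb{F}_p)\cong H_\ast(T;\mathbb{F}_p)\otimes H_\ast(BW_n;\mathbb{F}_p)$ of algebras, with $E_\ast$ the inclusion of the tensor factor $H_\ast(T)$ and $H_\ast$ the projection onto $H_\ast(BW_n)$. Using $\Sigma(X\times Y)\simeq\Sigma X\vee\Sigma Y\vee\Sigma(X\wedge Y)$, it is enough to construct a map $\Sigma^2 G\to\Sigma^2 T\vee\Sigma^2 BW_n\vee\Sigma^2(T\wedge BW_n)$ inducing this tensor isomorphism on $\mathbb{F}_p$-homology; since every space in sight is simply connected, $p$-local and of finite type, the mod $p$ Whitehead theorem will then promote it to the desired $p$-local homotopy equivalence.

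I would assemble such a map from the co-$H$ structure of the double suspension together with the projection $\Sigma^2 H$, the reduced diagonal $\bar\Delta\colon G\to G\wedge G$, and one nonformal ingredient: a stable retraction $r\colon\Sigma^2 G\to\Sigma^2 T$ of $\Sigma^2 E$, i.e. a map realizing the algebra projection $H_\ast(G)\to H_\ast(T)$. Given $r$, the summand $\Sigma^2 T$ is hit by $r$, the summand $\Sigma^2 BW_n$ by $\Sigma^2 H$, and the cross term $\Sigma^2(T\wedge BW_n)$ by the composite of $\Sigma^2\bar\Delta$ with a suspension of $r\wedge H$, which on homology selects the $H_\ast(T)\otimes H_\ast(BW_n)$ component of the reduced coproduct. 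Comultiplying $\Sigma^2 G$ and mapping the three copies by these maps yields $\Sigma^2 G\to\Sigma^2 T\vee\Sigma^2 BW_n\vee\Sigma^2(T\wedge BW_n)$, and an upper-triangularity argument against the tensor-product basis shows it is a mod $p$ homology isomorphism. Equivalently, one could construct a section of $\Sigma^2 H$ and assemble on the source side.

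The crux is therefore the construction of this single splitting map, and the whole point is that the obstruction to it is relaxed by suspension. By \cite{S Reformulation} the failure of \eqref{fib} to split \emph{unstably} for $n\neq p^j$ is exactly the non-sphericity of $b_{2np-2}\in H_{2np-2}(G)$, which is governed by the $p$-primary Kervaire invariant problem. To split $\Sigma^2 H$, however, one needs only that $\Sigma^2 b_{2np-2}$ be spherical in $\Sigma^2 G$, and more generally that each $\Sigma^2 b_{2np^i-2}$ lie in the Hurewicz image of $\Sigma^2 G$; this is a strictly weaker, suspended condition which should hold for every $n$. Granting it, I would realize the bottom Moore space of $BW_n$ by extending a map $S^{2np}\to\Sigma^2 G$ hitting $\Sigma^2 b_{2np-2}$ over $P^{2np+1}(p)=\Sigma^2 P^{2np-1}(p)$, using that $G$ has $H$-space exponent $p$ \cite{N1} to guarantee the relevant class has order $p$, and then climb the suspended Bockstein tower of $BW_n$ one Moore-space summand at a time, controlling the extensions with Lemma \ref{extension lemma} and the atomicity of \cite{GT1}. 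This produces the section of $\Sigma^2 H$, and dually the retraction $r$, needed above.

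The step I expect to be the main obstacle is precisely the verification that two suspensions already suffice, on both counts: that each $\Sigma^2 b_{2np^i-2}$ is genuinely spherical in $\Sigma^2 G$ for all $n$ (the suspended replacement for the Kervaire condition), and that the section or retraction together with the multiplicative cross-term can be realized and compared as honest maps at the level of $\Sigma^2$ rather than merely stably. I anticipate that after at most two suspensions $T$, $BW_n$ and $G$ are wedges of mod $p$ Moore spaces, so that in the relevant dimensions maps between them are detected by their effect on $\mathbb{F}_p$-homology and Bocksteins; establishing that this range is reached already at $\Sigma^2$, and carrying out the extensions there, is the delicate part. Once the splitting map exists, the identification of its homology with the tensor isomorphism is routine.
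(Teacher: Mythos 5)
Your reduction of the problem to producing a retraction of $\Sigma^2 E$ (equivalently a section of $\Sigma^2 H$) plus a cross-term map is sound and matches the shape of the paper's argument: the paper likewise assembles the equivalence from $\Sigma^2 E$, a section $s'$ of $\Sigma^2 H$, and a map $T^{2n+1}(p)\wedge\Sigma^2 BW_n\to\Sigma^2\Omega S^{2n+1}\{p\}$ (built from the Hopf construction on the loop multiplication rather than from your reduced diagonal, an immaterial difference), and then checks a mod $p$ homology isomorphism. The genuine gap is that you never construct the section, and the route you sketch for it would fail. You propose to build it by ``climbing the suspended Bockstein tower of $BW_n$ one Moore-space summand at a time,'' on the expectation that $\Sigma^2 BW_n$ and $\Sigma^2\Omega S^{2n+1}\{p\}$ are wedges of mod $p$ Moore spaces in the relevant range. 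They are not: $H^\ast(BW_n;\mathbb{F}_p)$ carries nontrivial Steenrod operations tying together the classes dual to $a_{2np^i-1}$ and $b_{2np^i-2}$ across different $i$ (stably $BW_n$ is built from the Snaith summands $D_{2,k}(S^{2n-1})$ of $\Omega^2S^{2n+1}$, which are not Moore spaces), so no amount of suspension reduces the section to a sequence of Moore-space extensions. Moreover Lemma \ref{extension lemma} only extends maps $P^{2m}(p)\to X$ over Anick spaces $T^{2m+1}(p)$, not over $BW_n$ or its suspensions, and the atomicity result of \cite{GT1} concerns maps $\Omega T^{2np+1}(p)\to BW_n$ that are degree one on the bottom cell; neither tool applies to the extension problem you pose.

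The paper's solution bypasses all of this in one step: Gray \cite{G} proved $\Sigma^2\Omega^2S^{2n+1}\simeq\Sigma^2(S^{2n-1}\times BW_n)$, so $\Sigma^2\nu\colon\Sigma^2\Omega^2S^{2n+1}\to\Sigma^2 BW_n$ already admits a section $s$; since in the Gray--Theriault construction $H$ is by definition an extension of $\nu$ through the connecting map $\partial\colon\Omega^2S^{2n+1}\to\Omega S^{2n+1}\{p\}$, the composite $s'=\Sigma^2\partial\circ s$ is the desired section of $\Sigma^2 H$, valid for every $n$. In particular your guess that $\Sigma^2 b_{2np-2}$ (and each $\Sigma^2 b_{2np^i-2}$) is spherical for all $n$ is correct, but it is a consequence of this construction rather than a hypothesis you may assume; as written, the step you yourself flag as the main obstacle is exactly the step left unproved.
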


\begin{proof}
In \cite{G}, Gray showed that the classifying space $BW_n$ of the fibre of the double suspension fits in a homotopy fibration 
\[ S^{2n-1} \stackrel{E^2}{\longrightarrow} \Omega^2S^{2n+1} \stackrel{\nu}{\longrightarrow} BW_n \]
and that there is a homotopy equivalence $\Sigma^2\Omega^2S^{2n+1} \simeq \Sigma^2(S^{2n-1} \times BW_n).$ Let $s \colon \Sigma^2BW_n \to \Sigma^2\Omega^2S^{2n+1}$ be a right homotopy inverse of $\Sigma^2\nu$. In the construction of Anick's fibration in \cite{GT2}, $T^{2n+1}$ is defined as the homotopy fibre of the map $H$, where $H$ is constructed as an extension 
\[
\xymatrix{
\Omega^2S^{2n+1} \ar[r]^-\partial \ar[d]^\nu & \Omega S^{2n+1}\{p\} \ar@{-->}[dl]^H \\
BW_n
}
\]
of $\nu$ through the connecting map of the homotopy fibration $\Omega S^{2n+1}\{p\} \longrightarrow \Omega S^{2n+1} \stackrel{p}{\longrightarrow} \Omega S^{2n+1}$. Therefore, by composing $s$ with $\Sigma^2\partial$, we obtain a right homotopy inverse $s' \colon \Sigma^2BW_n \to \Sigma^2\Omega S^{2n+1}\{p\}$ of $\Sigma^2H$. Next, consider the composite map $f$ defined by
\[ f\colon T^{2n+1}(p) \wedge \Sigma^2BW_n \xrightarrow{E\wedge s'} \Omega S^{2n+1}\{p\} \wedge \Sigma^2\Omega S^{2n+1}\{p\} \longrightarrow \Sigma^2\Omega S^{2n+1}\{p\} \]
where the second map is obtained by suspending the Hopf construction $\Sigma\Omega S^{2n+1}\{p\} \wedge \Omega S^{2n+1}\{p\} \to \Sigma\Omega S^{2n+1}\{p\}$ on $\Omega S^{2n+1}\{p\}$. Finally, since $\Sigma^2E$, $s'$ and $f$ each induce monomorphisms in mod $p$ homology, it follows that the map
\[ \Sigma^2(T^{2n+1}(p) \times BW_n) \simeq \Sigma^2T^{2n+1}(p) \vee \Sigma^2BW_n \vee (\Sigma^2T^{2n+1}(p) \wedge BW_n) \longrightarrow \Sigma^2S^{2n+1}\{p\} \]
defined by their wedge sum is a homology isomorphism and hence a homotopy equivalence.
\end{proof}

It follows from Proposition \ref{stable splitting} that $\Omega S^{2n+1}\{p\}$ has the stable homotopy type of a wedge of Moore spaces, Snaith summands $D_{2,k}(S^{2n-1})$ of the stable splitting of $\Omega^2S^{2n+1}$, and their smash products.

A similar argument can be used to give a stable splitting of the homotopy fibre $E^{2n+1}$ of the natural inclusion $i \colon P^{2n+1}(p) \to S^{2n+1}\{p\}$ where $BW_n$ is a stable retract. More precisely, it follows from \cite{GT2} that the extension $H$ of $\nu$ appearing in the proof of Proposition \ref{stable splitting} can be chosen to factor through a map $\delta \colon \Omega S^{2n+1}\{p\} \to E^{2n+1}$, and thus $BW_n$ also retracts off $E^{2n+1}$ after suspending twice. The space $E^{2n+1}$ along with a homotopy pullback diagram
\begin{equation} \label{diagram}
\begin{gathered}
\xymatrix{
\Omega^2S^{2n+1} \ar[r] \ar[d] & E^{2n+1} \ar[r] \ar@{=}[d] & F^{2n+1} \ar[r] \ar[d] & \Omega S^{2n+1} \ar[d] \\
\Omega S^{2n+1}\{p\} \ar[r]^-\delta & E^{2n+1} \ar[r] & P^{2n+1}(p) \ar[r]^-i \ar[d]^q & S^{2n+1}\{p\} \ar[d] \\
& & S^{2n+1} \ar@{=}[r] & S^{2n+1}.
}
\end{gathered}
\end{equation}
determined by the factorization of the pinch map $q \colon P^{2n+1}(p) \to S^{2n+1}$ in the bottom right square was thoroughly analyzed in Cohen, Moore and Neisendorfer's study of the homotopy theory of Moore spaces \cite{CMN1, CMN2}, where decompositions of $\Omega E^{2n+1}$, $\Omega F^{2n+1}$ and $\Omega P^{2n+1}(p)$ were used to determine the homotopy exponents of spheres and Moore spaces. The double suspension fibration $W_n \longrightarrow S^{2n-1} \stackrel{E^2}{\longrightarrow} \Omega^2S^{2n+1}$ was shown to retract off the homotopy fibration along the top row of the loops on \eqref{diagram} with $W_n$ and $S^{2n-1}$ appearing as factors containing the bottom cells in product decompositions of $\Omega E^{2n+1}$ and $\Omega F^{2n+1}$, respectively. 

Consider the morphism of homotopy fibrations 
\begin{equation} \label{diagram2}
\begin{gathered}
\xymatrix{
T^{2n+1}(p) \ar[r] \ar[d]^E & X \ar[d] \\
\Omega S^{2n+1}\{p\} \ar[r]^-\delta \ar[d]^H & E^{2n+1} \ar[d] \\
BW_n \ar@{=}[r] & BW_n
}
\end{gathered}
\end{equation}
determined by the factorization of $H$ through $\delta$. As with the splitting in Proposition \ref{stable splitting}, the retraction of $\Sigma^2BW_n$ off $\Sigma^2E^{2n+1}$ can be desuspended in Kervaire invariant one dimensions. One difference, however, is that since $W_n$ is always a retract of $\Omega E^{2n+1}$ by Cohen, Moore and Neisendorfer's decomposition, the image of the homology class $b_{2np-2} \in H_{2np-2}(\Omega S^{2n+1}\{p\})$ under $\delta_\ast$ is spherical for all $n$ (as opposed to just those $n=p^j$ for which $\pi_{2n(p-1)-2}^S$ contains an element of strong Kervaire invariant one), and thus the nonexistence of Kervaire invariant elements does not obstruct the possibility of an unstable decomposition of $E^{2n+1}$ as it does for $\Omega S^{2n+1}\{p\}$. It would therefore be interesting to know if a homotopy class $S^{2np-2} \to E^{2n+1}$ with Hurewicz image $\delta_\ast(b_{2np-2})$ could be extended to a map $\Omega T^{2np+1}(p) \to E^{2n+1}$ as in the proof of Theorem \ref{main thm} to prove the conjectured homotopy equivalence $BW_n \simeq \Omega T^{2np+1}(p)$ for all $n$ and split the homotopy fibration in the second column of the diagram above. We show below that $BW_n$ is in fact a retract of $E^{2n+1}$ for all $n$, delooping the result of Cohen, Moore and Neisendorfer.

\begin{proposition}
For all $n\ge 1$, $BW_n$ is a retract of $E^{2n+1}$.
\end{proposition}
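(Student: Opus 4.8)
The plan is to exhibit $BW_n$ as a retract of $E^{2n+1}$ by constructing a section of the map $\rho \colon E^{2n+1} \to BW_n$ appearing in the right-hand column of \eqref{diagram2}, i.e. the map characterized by the factorization $H = \rho \circ \delta$. A section $s \colon BW_n \to E^{2n+1}$ with $\rho \circ s \simeq \mathrm{id}$ immediately displays $BW_n$ as a retract, so the whole problem reduces to producing such an $s$. The crucial feature that makes this possible for every $n$, rather than only in Kervaire invariant one dimensions, is the observation already recorded above: since $W_n$ is always a retract of $\Omega E^{2n+1}$ by the decomposition of Cohen, Moore and Neisendorfer \cite{CMN1, CMN2}, the class $\delta_\ast(b_{2np-2}) \in H_{2np-2}(E^{2n+1})$ is spherical for all $n$. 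Thus there is a map $g \colon S^{2np-2} \to E^{2n+1}$ with Hurewicz image $\delta_\ast(b_{2np-2})$, and since $H_\ast$ sends $b_{2np-2}$ to a generator of $H_{2np-2}(BW_n) \cong \mathbb{Z}/p\mathbb{Z}$, the composite $\rho \circ g$ is degree one on the bottom cell of $BW_n$.

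First I would try to promote the retraction $\Omega\rho \colon \Omega E^{2n+1} \to W_n$, which admits the Cohen--Moore--Neisendorfer section $\sigma \colon W_n \to \Omega E^{2n+1}$, to an unstable statement by delooping. Writing $W_n = \Omega BW_n$ and using that $B\Omega Y \simeq Y$ for any connected space $Y$, a multiplicative (loop-map) choice of $\sigma$ would deloop under the classifying space functor to a map $s = B\sigma \colon BW_n \simeq B\Omega BW_n \to B\Omega E^{2n+1} \simeq E^{2n+1}$ with $\Omega s \simeq \sigma$, arranged so that $s$ restricts to $g$ on the bottom cell. The fact that the relevant structure maps in Gray's construction of $BW_n$ \cite{G} and the lift $j'$ of \cite{T 3-primary} can be taken to be $H$-maps is what makes it reasonable to expect such a multiplicative section to exist.

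Granting the section $s$, the conclusion is then formal. The composite $\rho \circ s \colon BW_n \to BW_n$ satisfies $\Omega(\rho \circ s) \simeq \Omega\rho \circ \sigma \simeq \mathrm{id}_{W_n}$, so running this through the bar spectral sequence $\mathrm{Tor}^{H_\ast(W_n)}(\mathbb{F}_p, \mathbb{F}_p) \Rightarrow H_\ast(BW_n)$ shows that $\rho \circ s$ induces the identity on the associated graded of $H_\ast(BW_n)$ and is therefore a mod $p$ homology isomorphism; in particular it is degree one on the bottom cell. As $BW_n$ is simply connected and $p$-local of finite type, $\rho \circ s$ is a homotopy equivalence $e$ (this also follows from the atomicity of $BW_n$ in \cite{GT1}), and then $e^{-1} \circ \rho$ is a retraction with section $s$, so $BW_n$ is a retract of $E^{2n+1}$.

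The main obstacle is the delooping step, i.e. producing a genuine map out of $BW_n$ rather than merely out of $\Omega E^{2n+1}$ or out of $\Sigma^2 BW_n$. Because $E^{2n+1}$ is not known to be an $H$-space, the extension lemma (Lemma \ref{extension lemma}) is unavailable here, so one cannot extend $g$ (or its extension over $P^{2np-1}(p)$) to a map $\Omega T^{2np+1}(p) \to E^{2n+1}$; doing so would in fact prove the stronger equivalence $BW_n \simeq \Omega T^{2np+1}(p)$, which remains open for general $n$. For the weaker retract statement it therefore suffices to desuspend the Cohen--Moore--Neisendorfer retraction by a single degree, and the leverage that makes this possible uniformly in $n$ --- in contrast to the loop space decomposition problem for $\Omega S^{2n+1}\{p\}$, where the spherical class is obstructed outside Kervaire dimensions --- is precisely that $\delta_\ast(b_{2np-2})$ is available for every $n$.
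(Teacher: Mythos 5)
There is a genuine gap at the step you yourself flag as the main obstacle: the delooping of the Cohen--Moore--Neisendorfer section $\sigma \colon W_n \to \Omega E^{2n+1}$. Writing $s = B\sigma$ requires $\sigma$ to be a loop map (or at least an $A_\infty$-map); being an $H$-map is not sufficient to apply the classifying space functor, and in any case no multiplicative structure on $\sigma$ is established here --- the section comes out of the Cohen--Moore--Neisendorfer product decomposition of $\Omega E^{2n+1}$ \cite{CMN1, CMN2}, obtained by Samelson product and differential graded Lie algebra methods, and there is no known reason it should deloop. ``It is reasonable to expect such a multiplicative section to exist'' is exactly where the argument stops being a proof. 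The other ingredient you invoke, the spherical class $\delta_\ast(b_{2np-2})$, only yields a map off the single cell $S^{2np-2}$, and as you correctly note Lemma \ref{extension lemma} cannot be applied to extend it because $E^{2n+1}$ is not known to be an $H$-space; so once that route is discarded you are left with no mechanism for producing a map out of $BW_n$ itself.

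The paper's proof sidesteps delooping entirely. Gray's construction \cite{G} supplies a fibration sequence $\Omega^2S^{2n+1} \to BW_n \times S^{4n-1} \to S^{2n} \xrightarrow{E} \Omega S^{2n+1}$ in which $BW_n$ already appears undelooped, as a factor of the fibre over $S^{2n}$. Since the composite $S^{2n} \xrightarrow{E} \Omega S^{2n+1} \to S^{2n+1}\{p\}$ is the inclusion of the bottom cell of $S^{2n+1}\{p\}$, it factors through the skeletal inclusion $S^{2n} \to P^{2n+1}(p)$ followed by $i$, giving a strictly commuting square over which one takes the induced map of fibres $BW_n \times S^{4n-1} \to E^{2n+1}$; restricting to the first factor produces a genuine map $g \colon BW_n \to E^{2n+1}$ with no multiplicativity hypotheses needed. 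A homology chase through the connecting maps (using that $\nu_\ast$ is an isomorphism on $H_{2np-2}$ and the commutativity of \eqref{diagram2}) shows the composite of $g$ with the extension $E^{2n+1} \to BW_n$ is degree one on the bottom cell, and the atomicity of $BW_n$ \cite{GT1} then makes it an equivalence --- the same endgame you propose, but reached from an actual map. If you can either produce a loop structure on $\sigma$ or find another construction of a map $BW_n \to E^{2n+1}$ hitting the bottom cell, your outline closes up; as written, the central step is missing.
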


\begin{proof}
By the construction of $BW_n$ in \cite{G}, there is a homotopy fibration sequence 
\[ \Omega^2S^{2n+1} \longrightarrow BW_n \times S^{4n-1} \longrightarrow S^{2n} \stackrel{E}{\longrightarrow} \Omega S^{2n+1} \]
where the connecting map factors as $\Omega^2S^{2n+1} \stackrel{\nu}{\longrightarrow} BW_n \stackrel{i_1}{\longrightarrow} BW_n \times S^{4n-1}$. Since the composite $S^{2n} \stackrel{E}{\longrightarrow} \Omega S^{2n+1} \longrightarrow S^{2n+1}\{p\}$ is just the inclusion of the bottom cell of $S^{2n+1}\{p\}$, there is a homotopy commutative diagram
\[
\xymatrix{
BW_n \times S^{4n-1} \ar[r] \ar[d] & E^{2n+1} \ar[d] \\
S^{2n} \ar[r] \ar[d]^E & P^{2n+1}(p) \ar[d]^i \\
\Omega S^{2n+1} \ar[r] & S^{2n+1}\{p\}
}
\]
where the induced map of fibres determines a map $g \colon BW_n \to E^{2n+1}$. Observe that the connecting map $\Omega^2S^{2n+1} \to BW_n \times S^{4n-1}$ of the first column induces an isomorphism on $H_{2np-2}(\;)$ since $\nu$ does, and the connecting map $\delta \colon \Omega S^{2n+1}\{p\} \to E^{2n+1}$ of the second column induces an isomorphism on $H_{2np-2}(\;)$ by the commutativity of \eqref{diagram2}. Therefore, since the map $\Omega^2S^{2n+1} \to \Omega S^{2n+1}\{p\}$ given by the loops on the bottom horizontal map induces a monomorphism in homology by a Serre spectral sequence argument, we conclude that $g \colon BW_n \to E^{2n+1}$ induces an isomorphism on $H_{2np-2}(\;)$ so that the composition $BW_n \xrightarrow{g} E^{2n+1} \to BW_n$ with the extension in \eqref{diagram2} is degree one on the bottom cell and thus a homotopy equivalence.
\end{proof}


\begin{thebibliography}{99}

\bibitem{Am} S. Amelotte, \emph{A homotopy decomposition of the fibre of the squaring map on $\Omega^3S^{17}$}, Homology, Homotopy and Applications \textbf{20} (2018), 141--154.

\bibitem{An} D. Anick, \emph{Differential algebras in topology}, Research Notes in Mathematics, AK Peters, 1993.

\bibitem{AG} D. Anick and B. Gray, \emph{Small $H$-spaces related to Moore spaces}, Topology \textbf{34} (1995), 859--881.

\bibitem{CCPS} H. E. A. Campbell, F. R. Cohen, F. P. Peterson and P. S. Selick, \emph{The space of maps of Moore spaces into spheres}, Proc. of John Moore Conf. on Alg. Top. and Alg. K-Theory, 72--100, Ann. Math. Studies vol. 113, Princeton Univ. Press, Princeton, 1987.

\bibitem{C 2-primary} F. R. Cohen, \emph{Two-primary analogues of Selick's theorem and the Kahn--Priddy theorem for the $3$-sphere}, Topology \textbf{23} (1984), 401--421.

\bibitem{CMN1} F. R. Cohen, J. C. Moore and J. A. Neisendorfer, \emph{Torsion in homotopy groups}, Ann. Math. \textbf{109} (1979), 121--168.

\bibitem{CMN2} F. R. Cohen, J. C. Moore and J. A. Neisendorfer, \emph{The double suspension and exponents of the homotopy groups of spheres}, Ann. Math. \textbf{110} (1979), 549--565.

\bibitem{CMN3} F. R. Cohen, J. C. Moore and J. A. Neisendorfer, \emph{Decompositions of loop spaces and applications to exponents}, Alg. Top., Proc. Sympos., Univ. Aarhus, Aarhus, 1978, 1--12, Lecture Notes in Math., 763, Springer, Berlin, 1979.

\bibitem{CS} F. R. Cohen and P. S. Selick, \emph{Splittings of two function spaces}, Quart. J. Math. Oxford \textbf{41} (1990), 145--153.

\bibitem{G2} B. Gray, \emph{On the sphere of origin of infinite families in the homotopy groups of spheres}, Topology \textbf{8} (1969), 219--232.

\bibitem{G} B. Gray, \emph{On the iterated suspension}, Topology \textbf{27} (1988), 301--310.

\bibitem{G EHP} B. Gray, \emph{EHP spectra and periodicity. I. Geometric constructions}, Trans. Amer. Math. Soc. \textbf{340} (1993), 595--616.

\bibitem{G Abelian} B. Gray, \emph{Abelian properties of Anick spaces}, Mem. Amer. Math. Soc. \textbf{246} (2017).

\bibitem{GT1} B. Gray and S. Theriault, \emph{On the double suspension and the mod-$p$ Moore space}, Contemp. Math. \textbf{399} (2006), 101--121.

\bibitem{GT2} B. Gray and S. Theriault, \emph{An elementary construction of Anick's fibration}, Geom. Topol. \textbf{14} (2010), 243--275.

\bibitem{N 3-primary} J. A. Neisendorfer, \emph{3-primary exponents}, Math. Proc. Cambridge Philos. Soc. \textbf{90} (1981), 63--83.

\bibitem{N1} J. A. Neisendorfer, \emph{Properties of certain $H$-spaces}, Quart. J. Math. Oxford \textbf{34} (1983), 201--209.


\bibitem{Ra} D. C. Ravenel, \emph{The non-existence of odd primary Arf invariant elements in stable homotopy}, Math. Proc. Cambridge Philos. Soc. \textbf{83} (1978), 429--443.

\bibitem{Ra Green} D. C. Ravenel, \emph{Complex Cobordism and Stable Homotopy Groups of Spheres}, second ed., AMS Chelsea Publishing, vol. 347, Amer. Math. Soc., Providence, RI, 2004.

\bibitem{S Odd} P. S. Selick, \emph{Odd primary torsion in $\pi_k(S^3)$}, Topology \textbf{17} (1978), 407--412.

\bibitem{S Decomposition} P. S. Selick, \emph{A decomposition of $\pi_\ast(S^{2p+1};\mathbb{Z}/p\mathbb{Z})$}, Topology \textbf{20} (1981), 175--177.

\bibitem{S Reformulation} P. S. Selick, \emph{A reformulation of the Arf invariant one mod $p$ problem and applications to atomic spaces}, Pac. J. Math. \textbf{108} (1983), 431--450.

\bibitem{S Space} P. S. Selick, \emph{Space exponents for loop spaces of spheres}, Fields Inst. Commun. \textbf{19}, Amer. Math. Soc., 1998, 279--283.

\bibitem{T Properties} S. Theriault, \emph{Properties of Anick's spaces}, Trans. Amer. Math. Soc. \textbf{353} (2001), 1009--1037. 

\bibitem{T 3-primary} S. Theriault, \emph{The $3$-primary classifying space of the fiber of the double suspension}, Proc. Amer. Math. Soc. \textbf{136} (2008), 1489--1499.

\bibitem{T A case} S. Theriault, \emph{A case when the fiber of the double suspension is the double loops on Anick's space}, Can. Math. Bull. \textbf{53} (2010), 730--736.


\end{thebibliography}
\end{document}